\begin{document}

\title{Conformal Submersion with Horizontal Distribution
}


\author{Mahesh T V and K S Subrahamanian Moosath$^{*}$\thanks{ *corresponding author}}


\institute{Department of Mathematics \at
              Indian Institute of Space Science and Technology \\
              Thiruvanathapuram, Kerala, India-695547\\
              \email{maheshtv.16@res.iist.ac.in(Mahesh T V)} \\ 
              \email{smoosath@iist.ac.in (K S Subrahamanian Moosath)}         
           }

\date{Received: date / Accepted: date}
\maketitle

\begin{abstract}
In this article, conformal submersion with horizontal distribution of Riemannian manifolds is defined which is a generalization of the affine submersion with horizontal distribution. Then, a necessary condition is obtained for the existence of a conformal submersion with horizontal distribution. For the dual connections $\nabla$ and $\overline{\nabla}$ on manifold $\mathbf{M}$ and $\nabla^*$ and $\overline{\nabla}^*$ on manifold $\mathbf{B}$, we show that $\pi: (\mathbf{M},\nabla) \longrightarrow (\mathbf{B}, \nabla^{*}) $ is a conformal submersion with horizontal distribution if and only if  $\pi: (\mathbf{M},\overline{\nabla}) \longrightarrow (\mathbf{B}, \overline{\nabla^{*}}) $ is a conformal submersion with  horizontal distribution. Also, we obtained a necessary and sufficient condition for $\pi \circ \sigma$ to become a geodesic of $\mathbf{B}$ if $\sigma$ is a geodesic of $\mathbf{M}$ for $ \pi: (\mathbf{M},\nabla,g_{m}) \rightarrow (\mathbf{B},\nabla^{*},g_{b})$ a conformal submersion with horizontal distribution.
\keywords{Conformal Submersion \and Fundamental equation \and Geodesics.}
\subclass{MSC 53A15 \and MSC 53C05}
\end{abstract}

\section{Introduction}
Riemannian submersion is a special tool in differential geometry and it has got application in different areas such as Kaluza-Klein theory, Yang-Mills theory, supergravity and superstring theories, statistical machine learning processes, medical imaging, theory of robotics and the statistical analysis on manifolds. O'Neill \cite{o1966fundamental}  defined a  Riemannian submersion and obtained the fundamental equations of Riemannian submersion for Riemannian manifolds. In \cite{o1967submersions}, O'Neill compare the geodesics of $\mathbf{M}$ and $\mathbf{B}$ for a semi-Riemannian submersion $\pi: \mathbf{M}\longrightarrow \mathbf{B}$. Abe and Hasegawa \cite{abe2001affine} defined an affine submersion with horizontal distribution which is a dual notion of affine immersion and obtained the fundamental equations. They compare the geodesics of $\mathbf{M}$ and $\mathbf{B}$ for an affine submersion with horizontal distribution $\pi: \mathbf{M}\longrightarrow \mathbf{B}$. 

Conformal submersion and the fundamental equations of conformal submersion were also studied by many researchers, see \cite{lornia1993fe}, \cite{gud1990har} for example. Horizontally conformal submersion is a generalization of the Riemannian submersion. Horizontally conformal submersion is a special horizontally conformal map which got introduced independently by Fuglede \cite{fuglede1978harmonic}  and Ishihara \cite{ishihara1978mapping}. Their study  focuses on the conformality relation between metrics on the Riemannian manifolds and  Levi-Civita connections. Here the study is on the conformal submersion between Riemannian manifolds $\mathbf{M}$ and  $\mathbf{B}$ and the conformality relation between any two affine connections $\nabla$ and $\nabla^{*}$ (need not necessarily be the Levi-Civita connection ) on $\mathbf{M}$ and  $\mathbf{B}$, respectively. 

In section 2, relevant basic concepts are given. In section 3, the concept of a conformal submersion with horizontal distribution is defined and a necessary condition is obtained for the existence of such maps. Fundamental equations of conformal submersion with horizontal distribution is given and show that $\pi: (\mathbf{M},\nabla) \longrightarrow (\mathbf{B}, \nabla^{*}) $ is a conformal submersion with horizontal distribution if and only if  $\pi: (\mathbf{M},\overline{\nabla}) \longrightarrow (\mathbf{B}, \overline{\nabla^{*}}) $ is a conformal submersion with horizontal distribution. In section 4, we obtained a necessary and sufficient condition for $\pi \circ \sigma$ to be a geodesic of $\mathbf{B}$ when $\sigma$ is a geodesic of $\mathbf{M}$ for $ \pi: (\mathbf{M},\nabla,g_{m}) \rightarrow (\mathbf{B},\nabla^{*},g_{b})$ a conformal submersion with horizontal distribution. Throughout this paper, all the objects are assumed to be smooth.
\section{Affine submersion with horizontal distribution}
In this section, the concepts like submersion with horizontal distribution and affine submersion with horizontal distribution are given.

Let $\mathbf{M}$ and $\mathbf{B}$ be Riemannian manifolds with dimension $n$ and $m$ respectively with $n>m$. An onto map $\pi :\mathbf{M}\longrightarrow \mathbf{B}$ is called a submersion if $\pi_{*p} : T_{p}\mathbf{M} \longrightarrow T_{\pi(p)}\mathbf{B}$ is onto for all $p \in \mathbf{M}$. For a submersion $\pi :\mathbf{M}\longrightarrow \mathbf{B}$, $\pi^{-1}(b)$ is a submanifold of $\mathbf{M}$ of dimension $(n-m)$ for each $b \in \mathbf{B} $. These submanifolds $\pi^{-1}(b)$  are called fibers. Set $\mathcal{V}(\mathbf{M})_{p}$ = $Ker(\pi_{*p})$ for each $p \in \mathbf{M}$.

\begin{definition}
A submersion $\pi :\mathbf{M}\longrightarrow \mathbf{B}$ is called a submersion with horizontal distribution if there is a smooth distribution $p\longrightarrow \mathcal{H}(M)_{p}$ such that 
\begin{equation}
T_{p}\mathbf{M} = \mathcal{V}(\mathbf{M})_{p} \bigoplus \mathcal{H}(M)_{p}.
\end{equation}
\end{definition}

We call $\mathcal{V}(\mathbf{M})_{p}$ ($\mathcal{H}(M)_{p}$) the vertical (horizontal) subspace of $T_{p}\mathbf{M}$. $ \mathcal{H}$ and $\mathcal{V}$ denote the projections of the tangent space of $\mathbf{M}$ onto the horizontal and vertical subspaces, respectively.
\begin{note}
Let $\pi :\mathbf{M}\longrightarrow \mathbf{B}$  be a submersion with horizontal distribution  $\mathcal{H}(M)$. Then, $\pi_{*}\mid_{\mathcal{H}(M)_{p}}:\mathcal{H}(M)_{p} \longrightarrow T_{\pi(p)}\mathbf{B}$ is an isomorphism for each $p \in \mathbf{M}$.
\end{note}

\begin{definition}
A vector field $Y$ on $\mathbf{M}$ is said to be projectable if there exists a vector field $Y_{*}$ on $\mathbf{B}$ such that $\pi_{*}(Y_{p})$ = $Y_{*\pi(p)}$ for each $p \in \mathbf{M}$, that is $Y$ and $Y_{*}$ are $\pi$- related. A vector field $X$ on $\mathbf{M}$ is said to be basic if it is projectable and horizontal. Every vector field $X$ on $\mathbf{B}$ has a unique smooth horizontal lift,  denoted by $\tilde{X}$, to $\mathbf{M}$. 
\end{definition}

\begin{definition}
Let $\nabla$ and $\nabla^{*}$ be affine connections on $\mathbf{M}$ and $\mathbf{B}$ respectively. $\pi : (\mathbf{M},\nabla) \longrightarrow (\mathbf{B},\nabla^{*})$ is said to be an affine submersion with horizontal distribution if $\pi: \mathbf{M}\longrightarrow \mathbf{B}$ is a submersion with horizontal distribution and satisfies $\mathcal{H}(\nabla_{\tilde{X}} \tilde{Y})$ = $(\nabla^{*}_{X}Y)^{\tilde{}}$, for vector fields $X,Y$ in $\mathcal{X}(\mathbf{B})$, where $\mathcal{X}(\mathbf{B})$ denotes the set of all vector fields on $\mathbf{B}$.
\end{definition}

\begin{note} 
Abe and Hasegawa \cite{abe2001affine} proved that the connection $\nabla$ on $\mathbf{M}$ induces a connection $\nabla^{'}$ on $\mathbf{B}$ when $\pi: \mathbf{M}\longrightarrow \mathbf{B}$ is a submersion with horizontal distribution and $\mathcal{H}(\nabla_{\tilde{X}}\tilde{Y})$ is projectable for all vector fields $X$ and $Y$ on $\mathbf{B}$. 

A connection $\mathcal{V}\nabla \mathcal{V}$ on the subbundle $\mathcal{V}(\mathbf{M})$ is defined by $(\mathcal{V}\nabla \mathcal{V})_{E}V$ = $\mathcal{V}(\nabla_{E}V)$ for any vertical vector field $V$ and any vector field $E$ on $\mathbf{M}$. For each $b \in \mathbf{B}$,  $\mathcal{V}\nabla \mathcal{V}$ induces a unique connection $\hat{\nabla}^{ b}$ on the fiber $\pi^{-1}(b)$. Abe and Hasegawa \cite{abe2001affine} proved that if $\nabla$ is torsion free, then $\hat{\nabla}^{b}$ and $\nabla^{'}$ are also torsion free.
\end{note}


\begin{definition}
 Let $\pi: (\mathbf{M},\nabla) \longrightarrow (\mathbf{B},\nabla^{*})$ be an affine submersion with horizontal distribution $\mathcal{H}(M)$. Then, the fundamental tensors $T$ and $A$ are defined as
\begin{eqnarray}
T_{E}F &=& \mathcal{H}(\nabla_{\mathcal{V}E}\mathcal{V}F)+\mathcal{V}(\nabla_{\mathcal{V}E}\mathcal{H}F),\\
A_{E}F &=&  \mathcal{V}(\nabla_{\mathcal{H}E}\mathcal{H}F)+\mathcal{H}(\nabla_{\mathcal{H}E}\mathcal{V}F),
\end{eqnarray}
for $E$ and $F$ in $\mathcal{X}(\mathbf{M})$.
\end{definition}

\begin{remark}
For a Riemannian manifold $(\mathbf{M},g)$ with an affine connection $\nabla$ one can define the dual connection  $\overline{\nabla}$ by  $ Xg(Y,Z) = g(\nabla_{X}Y,Z) + g(Y,\overline{\nabla}_{X} Z)$, for   $X,Y$ and $Z$ in $\mathcal{X}(\mathbf{M})$. The fundamental tensors correspond to the dual connection $\overline{\nabla}$ is denoted by $\overline{T}$ and $\overline{A}$.
\end{remark}

Note that these are $(1, 2)$-tensors and these tensors can be defined in a general situation, namely, it is enough that a manifold $\mathbf{M}$ has a splitting $T\mathbf{M} = \mathcal{V}(\mathbf{M}) \bigoplus \mathcal{H}(\mathbf{M})$. Also, note that $T_{E}$ and $A_{E}$ reverses the horizontal and vertical subspaces and $T_{E} = T_{\mathcal{V}E}$, $A_{E} = A_{\mathcal{H}E}$.
\section{Conformal submersion with horizontal distribution}

In this section, we generalize the concept of affine submersion with horizontal distribution to conformal submersion with horizontal distribution and prove a necessary condition for the existence of such maps. Then, we show that $\pi: (\mathbf{M},\nabla) \longrightarrow (\mathbf{B}, \nabla^{*}) $ is a conformal submersion with horizontal distribution if and only if  $\pi: (\mathbf{M},\overline{\nabla}) \longrightarrow (\mathbf{B}, \overline{\nabla^{*}}) $ is a conformal submersion with horizontal distribution. 

\begin{definition}
Let $(\mathbf{M},g_{m})$ and $(\mathbf{B},g_{b})$ be Riemannian manifolds. A submersion $\pi: (\mathbf{M},g_{m}) \longrightarrow (\mathbf{B},g_{b}) $ is called a conformal submersion if there exists a $\phi \in C^{\infty}(\mathbf{M})$ such that 
\begin{equation}
g_{m}(X,Y) = e^{2 \phi}g_{b}(\pi_{*}X,\pi_{*}Y),
\end{equation}
for all horizontal vector fields $X,Y \in \mathcal{X}(\mathbf{M})$.
\end{definition}

For $\pi : (\mathbf{M},\nabla) \longrightarrow (\mathbf{B},\nabla^*) $ an affine submersion with horizontal distribution, $\mathcal{H}(\nabla_{\tilde{X}}\tilde{Y}) =(\nabla^{*}_{X}Y)^{\tilde{}}$, for $X,Y \in \mathcal{X}(\mathbf{B})$. In the case of conformal submersion we prove the following theorem, which is the motivation for us to generalize the concept of affine submersion with horizontal distribution. 

\begin{theorem}\label{cs12}
Let $\pi: (\mathbf{M},g_{m}) \longrightarrow (\mathbf{B},g_{b}) $  be a conformal submersion. If $\nabla$ on $\mathbf{M}$ and $\nabla^{*}$ on $B$ are the Levi-Civita connections, then 
\begin{eqnarray*}
\mathcal{H}(\nabla_{\tilde{X}}\tilde{Y}) &=& \tilde{(\nabla^{*}_{X}Y)} +\tilde{X}(\phi)\tilde{Y}+ \tilde{Y}(\phi)\tilde{X}-\mathcal{H}(grad_{\pi}\phi)g_{m}(\tilde{X},\tilde{Y}),
\end{eqnarray*}
where $X,Y\in \mathcal{X}(\mathbf{B})$  and $\tilde{X},\tilde{Y}$ denote their unique horizontal lifts on $\mathbf{M}$ and $grad_{\pi}\phi$ denote the gradient of $\phi$ with respect to $g_{m}$
\end{theorem}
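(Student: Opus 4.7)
The plan is to test the identity against an arbitrary horizontal lift $\tilde{Z}$ using the Koszul formula. Since $\nabla$ is Levi-Civita, we have
\begin{align*}
2 g_m(\nabla_{\tilde X}\tilde Y,\tilde Z) &= \tilde X g_m(\tilde Y,\tilde Z) + \tilde Y g_m(\tilde X,\tilde Z) - \tilde Z g_m(\tilde X,\tilde Y) \\
&\quad + g_m([\tilde X,\tilde Y],\tilde Z) - g_m([\tilde X,\tilde Z],\tilde Y) - g_m([\tilde Y,\tilde Z],\tilde X).
\end{align*}
First I would substitute the conformal identity $g_m(\tilde X,\tilde Y) = e^{2\phi}(g_b(X,Y)\circ\pi)$ everywhere. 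Differentiating, each of the three scalar-derivative terms splits into a piece containing $2\tilde X(\phi) g_b(Y,Z)$ (and its cyclic analogues) plus $e^{2\phi}$ times $X g_b(Y,Z)$ pulled back via $\pi$.

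Next I would handle the bracket terms. Since $\tilde X$ is $\pi$-related to $X$ and $\tilde Y$ to $Y$, the bracket $[\tilde X,\tilde Y]$ is $\pi$-related to $[X,Y]$, so its horizontal component is $\widetilde{[X,Y]}$; and the inner product with $\tilde Z$ only sees the horizontal part. Thus each bracket term becomes $e^{2\phi}$ times the corresponding bracket term on $\mathbf{B}$. Collecting the six $e^{2\phi}$-weighted terms reassembles the Koszul formula on $\mathbf{B}$ and yields $2 e^{2\phi} g_b(\nabla^*_X Y,Z) = 2 g_m(\widetilde{\nabla^*_X Y},\tilde Z)$.

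After this regrouping what survives is
\[
2 g_m(\nabla_{\tilde X}\tilde Y,\tilde Z) = 2 g_m(\widetilde{\nabla^*_X Y},\tilde Z) + 2 e^{2\phi}\bigl[\tilde X(\phi) g_b(Y,Z) + \tilde Y(\phi) g_b(X,Z) - \tilde Z(\phi) g_b(X,Y)\bigr].
\]
The first two extra terms are immediately $2 g_m(\tilde X(\phi)\tilde Y + \tilde Y(\phi)\tilde X,\tilde Z)$. For the third I would use $\tilde Z(\phi) = g_m(\tilde Z,\operatorname{grad}_\pi\phi) = g_m(\tilde Z,\mathcal{H}(\operatorname{grad}_\pi\phi))$, since $\tilde Z$ is horizontal; combined with $e^{2\phi} g_b(X,Y) = g_m(\tilde X,\tilde Y)$ this rewrites the term as $g_m\bigl(g_m(\tilde X,\tilde Y)\,\mathcal{H}(\operatorname{grad}_\pi\phi),\tilde Z\bigr)$. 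Since the identity now holds for every horizontal lift $\tilde Z$ and both sides of the resulting equation are horizontal, non-degeneracy of $g_m$ on $\mathcal{H}(M)$ yields exactly the claimed expression for $\mathcal{H}(\nabla_{\tilde X}\tilde Y)$.

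The only step that requires a bit of care is the bracket computation: one must recognize that, although $[\tilde X,\tilde Y]$ need not be horizontal, its vertical part is killed by pairing with a horizontal $\tilde Z$, so the conformal relation applies cleanly. Everything else is bookkeeping of derivatives of $e^{2\phi}$.
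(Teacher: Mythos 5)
Your proposal is correct and follows essentially the same route as the paper: expand the Koszul formula for $\nabla$ on horizontal lifts, push the conformal relation $g_m(\tilde X,\tilde Y)=e^{2\phi}g_b(X,Y)\circ\pi$ through the derivative and bracket terms, reassemble the Koszul formula for $\nabla^*$, and convert the leftover $\tilde Z(\phi)$ term via the gradient of $\phi$. Your explicit remark that the vertical part of $[\tilde X,\tilde Y]$ is killed by pairing with the horizontal $\tilde Z$ is a point the paper leaves implicit, but the argument is otherwise identical.
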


\begin{proof}
We have the Koszul formula for the Levi-Civita connection,
\begin{small} 
\begin{eqnarray}\label{cs3}
2g_{m}(\nabla_{\tilde{X}}\tilde{Y},\tilde{Z}) &=& \tilde{X}g_{m}(\tilde{Y},\tilde{Z})+\tilde{Y}g_{m}(\tilde{Z},\tilde{X})-\tilde{Z}g_{m}(\tilde{X},\tilde{Y})-g_{m}(\tilde{X},[\tilde{Y},\tilde{Z}]) \nonumber \\ &&+g_{m}(\tilde{Y},[\tilde{Z},\tilde{X}])+g_{m}(\tilde{Z},[\tilde{X},\tilde{Y}]).
\end{eqnarray}
\end{small}
Now, consider 
\begin{eqnarray*}
\tilde{X}g_{m}(\tilde{Y},\tilde{Z}) &=& \tilde{X}(e^{2\phi}g_{b}(Y,Z))\\
                                    &=& \tilde{X}(e^{2\phi})g_{b}(Y,Z)+                          e^{2\phi}\tilde{X}(g_{b}(Y,Z))\\
                                    &=& 2e^{2\phi}\tilde{X}(\phi)g_{b}(Y,Z)+e^{2\phi}Xg_{b}(Y,Z).\\                                    
\end{eqnarray*}
Similarly, 
\begin{eqnarray*}
\tilde{Y}g_{m}(\tilde{X},\tilde{Z}) &=& 2e^{2\phi}\tilde{Y}(\phi)g_{b}(X,Z)+e^{2\phi}Yg_{b}(X,Z).\\
\tilde{Z}g_{m}(\tilde{X},\tilde{Y}) &=& 2e^{2\phi}\tilde{Z}(\phi)g_{b}(X,Y)+e^{2\phi}Zg_{b}(X,Y).
\end{eqnarray*}
Also, 
\begin{eqnarray*}
g_{m}(\tilde{X}, [\tilde{Y},\tilde{Z}]) &=& e^{2\phi}g_{b}(X,[Y,Z]).\\
g_{m}(\tilde{Y}, [\tilde{Z},\tilde{X}]) &=& e^{2\phi}g_{b}(Y,[Z,X]).\\
g_{m}(\tilde{Z}, [\tilde{X},\tilde{Y}]) &=& e^{2\phi}g_{b}(Z,[X,Y]).
\end{eqnarray*}
Then, from the equation (\ref{cs3}) and the above equations  
\begin{small}
\begin{eqnarray*}
2g_{m}(\nabla_{\tilde{X}}\tilde{Y},\tilde{Z})&=&2\tilde{X}(\phi)e^{2\phi}g_{b}(Y,Z)+ 2\tilde{Y}(\phi)e^{2\phi}g_{b}(X,Z)- 2\tilde{Z}(\phi)e^{2\phi}g_{b}(X,Y)\\&&+ 2e^{2\phi}g_{b}(\nabla^{*}_{X}Y,Z).
\end{eqnarray*}
\end{small}
Since, $\tilde{Z}(\phi)$ = $e^{2\phi}g_{b}(\pi_{*}(grad_{\pi}\phi), Z)$ we get
\begin{eqnarray*}
\pi_{*}(\nabla_{\tilde{X}}\tilde{Y}) &=& \nabla^{*}_{X}Y +\tilde{X}(\phi)Y+ \tilde{Y}(\phi)X - e^{2\phi}\pi_{*}(grad_{\pi}\phi)g_{b}(X,Y).
\end{eqnarray*}
Hence,  
\begin{eqnarray*}
\mathcal{H}(\nabla_{\tilde{X}}\tilde{Y}) &=& \tilde{(\nabla^{*}_{X}Y)} +\tilde{X}(\phi)\tilde{Y}+ \tilde{Y}(\phi)\tilde{X} -\mathcal{H}(grad_{\pi}\phi)g_{m}(\tilde{X},\tilde{Y}).
\end{eqnarray*}
\end{proof}

Now, we generalize the concept of affine submersion with horizontal distribution. 

\begin{definition} Let $\pi: (\mathbf{M},g_{m}) \longrightarrow (\mathbf{B},g_{b}) $  be a conformal submersion and let $\nabla$ and $\nabla^*$ be affine connections on $\mathbf{M}$ and $\mathbf{B}$, respectively. Then, $\pi : (\mathbf{M},\nabla) \longrightarrow (\mathbf{B}, \nabla^{*})$ is said to be a conformal submersion with horizontal distribution $\mathcal{H}(\mathbf{M}) = \mathcal{V}(\mathbf{M})^{\perp}$ if
 \begin{eqnarray*}
\mathcal{H}(\nabla_{\tilde{X}}\tilde{Y}) &=& \tilde{(\nabla^{*}_{X}Y)} +\tilde{X}(\phi)\tilde{Y}+ \tilde{Y}(\phi)\tilde{X} -\mathcal{H}(grad_{\pi}\phi)g_{m}(\tilde{X},\tilde{Y}),
\end{eqnarray*}
for some $\phi \in C^{\infty}(\mathbf{M})$ and for all $X,Y \in \mathcal{X}(\mathbf{B})$.
\end{definition}

\begin{note}
If $\phi$ is constant, it turns out to be an affine submersion with horizontal distribution. 
\end{note}

\begin{example}
Let $H^{n} = \lbrace (x_{1},...,x_{n}) \in \mathbf{R}^{n} : x_{n} > 0 \rbrace$ and $\tilde{g} = \frac{1}{x_{n}^{2}} g$ be a Riemannian metric on $H^{n}$, where $g$ is the Euclidean metric on $\mathbf{R}^{n}$. Let $\pi : H^{n} \longrightarrow \mathbf{R}^{n-1}$ be  defined by $\pi(x_{1},...,x_{n}) = (x_{1},...,x_{n-1}).$
 Let $\phi : H^{n} \longrightarrow \mathbf{R}$ be  defined by $\phi(x_{1},...,x_{n}) = \log(\frac{1}{x_{n}^{2}})$. Then, we have 
\begin{eqnarray*}
\tilde{g}\left(\frac{\partial}{\partial x_{i}}, \frac{\partial}{\partial x_{j}}\right) = e^{\phi} g\left(\frac{\partial}{\partial x_{i}}, \frac{\partial}{\partial x_{j}}\right).
\end{eqnarray*}
Hence, $\pi :(H^{n},\tilde{g}) \longrightarrow (\mathbf{R}^{n-1},g)$ is 
a conformal submersion. Then, by the theorem (\ref{cs12}), $\pi :(H^{n},\nabla) \longrightarrow (\mathbf{R}^{n-1},\nabla^{*})$ is a conformal submersion with horizontal distribution, where $\nabla$ and $\nabla^{*}$ are Levi-Civita connections on $H^{n}$ and $\mathbf{R}^{n-1}$, respectively.
\end{example}

Now, we prove a necessary condition for the existence of a conformal submersion with horizontal distribution.
\begin{theorem}
Let $\pi: (\mathbf{M},g_{m}) \longrightarrow (\mathbf{B},g_{b}) $  be a conformal submersion and $\nabla$ be an affine connection on $\mathbf{M}$. Assume that $\pi: \mathbf{M}\longrightarrow \mathbf{B}$ is a submersion with horizontal distribution. If $\mathcal{H}(\nabla_{\tilde{X}}\tilde{Y})$ is projectable for all vector fields $X$ and $Y$ on $\mathbf{B}$. Then, there exists a unique connection $\nabla^{*}$ on $\mathbf{B}$ such that $\pi : (\mathbf{M},\nabla) \longrightarrow (\mathbf{B}, \nabla^{*})$ is a conformal submersion with horizontal distribution.
\end{theorem}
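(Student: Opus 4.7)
The strategy is to solve the defining identity for $\nabla^{*}$. Rearranging the formula that characterizes a conformal submersion with horizontal distribution, one is forced to set, for $X,Y\in\mathcal{X}(\mathbf{B})$,
\[
\widetilde{\nabla^{*}_{X}Y} \;:=\; \mathcal{H}(\nabla_{\tilde{X}}\tilde{Y}) \;-\; \tilde{X}(\phi)\tilde{Y} \;-\; \tilde{Y}(\phi)\tilde{X} \;+\; \mathcal{H}(grad_{\pi}\phi)\,g_{m}(\tilde{X},\tilde{Y}),
\]
and to declare $\nabla^{*}_{X}Y$ to be the unique vector field on $\mathbf{B}$ whose horizontal lift is this right hand side. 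Because the horizontal lift $X\mapsto\tilde{X}$ is a bijection between $\mathcal{X}(\mathbf{B})$ and the basic horizontal vector fields on $\mathbf{M}$, this prescription delivers uniqueness automatically (no choice remains once the defining identity is imposed) and reduces existence to two verifications: that the displayed right hand side is basic, and that the resulting assignment $(X,Y)\mapsto\nabla^{*}_{X}Y$ satisfies the axioms of an affine connection.

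For the first verification, the hypothesis on $\mathcal{H}(\nabla_{\tilde{X}}\tilde{Y})$ handles the first summand directly. The remaining $\phi$-correction terms are horizontal by construction, so only projectability remains to be checked. I would use the identities $\tilde{X}(\phi)=g_{m}(grad_{\pi}\phi,\tilde{X})$, its analogue for $\tilde{Y}$, and $g_{m}(\tilde{X},\tilde{Y})=e^{2\phi}\,g_{b}(X,Y)\circ\pi$, and then test each summand against an arbitrary vertical field $V$. The standard fact that $[V,\tilde{X}]$ is vertical whenever $\tilde{X}$ is basic, combined with the fact that the three $\phi$-terms appear in the specific linear combination forced by Theorem~\ref{cs12}, should make their vertical variations cancel and yield a basic field.

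For the second verification, $\mathbb{R}$-bilinearity is immediate from the $\mathbb{R}$-bilinearity of $\nabla$ and of $\mathcal{H}$ together with the linearity of the horizontal lift. For $C^{\infty}(\mathbf{B})$-linearity in the $X$-slot, one uses $\widetilde{fX}=(f\circ\pi)\tilde{X}$ and observes that $f\circ\pi$ pulls uniformly out of every summand on the right. The one genuine computation is the Leibniz rule in the $Y$-slot: expanding $\nabla_{\tilde{X}}\bigl((f\circ\pi)\tilde{Y}\bigr)$ produces an extra $\tilde{X}(f\circ\pi)\tilde{Y}=(X(f)\circ\pi)\tilde{Y}$, which projects to the expected $X(f)\,Y$, while the $\phi$-correction terms scale by $f\circ\pi$ consistently on both sides, so the identity $\nabla^{*}_{X}(fY)=X(f)Y+f\nabla^{*}_{X}Y$ closes up.

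The main obstacle is the projectability step. Individually, none of the three $\phi$-terms need be basic when $\phi$ has vertical dependence, so the heart of the argument is the observation that the particular combination dictated by Theorem~\ref{cs12} is precisely the one in which the vertical pieces cancel. Once that cancellation is secured, the remainder is routine bookkeeping patterned on the affine case of Abe and Hasegawa.
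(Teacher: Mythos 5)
Your construction is the same as the paper's: the paper simply sets $\nabla^{*}_{X}Y := \pi_{*}(\nabla_{\tilde{X}}\tilde{Y})-\tilde{X}(\phi)Y-\tilde{Y}(\phi)X+e^{2\phi}\pi_{*}(grad_{\pi}\phi)g_{b}(X,Y)$, verifies the Leibniz rule in the $Y$-slot exactly as you do, and declares uniqueness to be clear from the formula. On the routine bookkeeping (tensoriality in $X$, additivity, the Leibniz computation) your proposal and the paper agree.

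The problem is the step you yourself flag as ``the main obstacle'': you assert, without computation, that the vertical variations of the three $\phi$-correction terms cancel so that their sum is basic. That assertion is a genuine gap, and the cancellation in fact fails in general. Writing $G_{p}=e^{2\phi(p)}\pi_{*p}(grad_{\pi}\phi)\in T_{\pi(p)}\mathbf{B}$, one has $\tilde{X}(\phi)|_{p}=g_{b}(G_{p},X)$ and $\tilde{Y}(\phi)|_{p}=g_{b}(G_{p},Y)$, so the correction evaluated at $p$ is $L_{X,Y}(G_{p})$ where $L_{X,Y}(D)=-g_{b}(D,X)Y-g_{b}(D,Y)X+g_{b}(X,Y)D$. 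For the prescription to descend to $\mathbf{B}$ you need $L_{X,Y}(G_{p}-G_{q})=0$ for all $X,Y$ and all $p,q$ in the same fibre; testing on orthonormal pairs $X,Y$ (or on a unit $X=Y$ when $\dim\mathbf{B}=1$) forces $G_{p}=G_{q}$. Hence the combination is basic if and only if $\tilde{X}(\phi)$ is constant on fibres for every $X\in\mathcal{X}(\mathbf{B})$, equivalently iff $e^{2\phi}\pi_{*}(grad_{\pi}\phi)$ is itself projectable --- a condition on $\phi$ that is not implied by the hypothesis that $\mathcal{H}(\nabla_{\tilde{X}}\tilde{Y})$ is projectable. (In the paper's example it holds trivially because $grad_{\pi}\phi$ is vertical there.) To close the argument you must either impose this as an additional hypothesis --- for instance $\phi$ constant on fibres, in which case $V(\tilde{X}(\phi))=\tilde{X}(V\phi)+[V,\tilde{X}](\phi)=0$ since $[V,\tilde{X}]$ is vertical --- or derive it from the stated assumptions. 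Neither your proposal nor, it should be said, the paper's own proof does this; the paper passes over the well-definedness of $\nabla^{*}$ in silence.
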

\begin{proof}
Setting $\nabla^{*}_{X}Y$ = $\pi_{*}(\nabla_{\tilde{X}}\tilde{Y})-\tilde{X}(\phi)Y- \tilde{Y}(\phi)X + e^{2\phi}\pi_{*}(grad_{\pi}\phi)g_{b}(X,Y)$, we show that $\nabla^{*}$ is an affine connection on $\mathbf{B}$. For example, since $\tilde{(fX)} = (f \circ \pi)\tilde{X}$, we have
\begin{eqnarray}\label{csg1}
\nabla^{*}_{X}(fY) &=& \pi_{*}(\nabla_{\tilde{X}}(f\circ \pi)\tilde{Y})-\tilde{X}(\phi)fY- \tilde{(fY)}(\phi)X \nonumber\\&& + e^{2\phi}\pi_{*}(grad_{\pi}\phi)g_{b}(X,fY).
\end{eqnarray}
Now, consider
\begin{eqnarray}\label{csg2}
\pi_{*}(\nabla_{\tilde{X}}(f\circ \pi)\tilde{Y}) &=& \pi_{*}((\tilde{X}(f \circ \pi))\tilde{Y} + (f \circ \pi)\nabla_{\tilde{X}}\tilde{Y})\nonumber\\ &=& X(f)Y +f\nabla^{*}_{X}Y+\tilde{X}(\phi)fY+f\tilde{Y}(\phi)X \nonumber\\&& - e^{2\phi}\pi_{*}(grad_{\pi}\phi)fg_{b}(X,Y).
\end{eqnarray}
Then, from the equations (\ref{csg1}) and (\ref{csg2})  
\begin{eqnarray*}
\nabla^{*}_{X}(fY) &=&  X(f)Y +f\nabla^{*}_{X}Y.
\end{eqnarray*}
The other condition for affine connection can be proved similarly. The uniqueness is clear from the definition.
\end{proof}
 As in the case of affine submersion with horizontal distribution we have, 
 \begin{lemma} 
 Let $\pi : (\mathbf{M},\nabla) \longrightarrow (\mathbf{B}, \nabla^{*})$  be conformal submersion with horizontal distribution, then 
 \begin{eqnarray}
 \mathcal{H}(Tor(\nabla)(\tilde{X},\tilde{Y})) &=&\widetilde{(Tor(\nabla^{*})(X,Y))}.\\
  \mathcal{V}(Tor(\nabla)(V,W)) &=&(Tor(\hat{\nabla})(V,W)).
 \end{eqnarray}
 \end{lemma}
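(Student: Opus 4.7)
The plan is to expand each torsion tensor in its standard form $\mathrm{Tor}(\nabla)(E,F) = \nabla_E F - \nabla_F E - [E,F]$ and take horizontal or vertical projections, using the defining formula of conformal submersion with horizontal distribution for the first identity and the definition of $\hat{\nabla}$ (via $\mathcal{V}\nabla\mathcal{V}$) for the second.

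For the first identity, I would start with
\[
\mathcal{H}\bigl(\mathrm{Tor}(\nabla)(\tilde X,\tilde Y)\bigr)=\mathcal{H}(\nabla_{\tilde X}\tilde Y)-\mathcal{H}(\nabla_{\tilde Y}\tilde X)-\mathcal{H}([\tilde X,\tilde Y])
\]
and substitute the defining relation
\[
\mathcal{H}(\nabla_{\tilde X}\tilde Y)=\widetilde{\nabla^{*}_{X}Y}+\tilde X(\phi)\tilde Y+\tilde Y(\phi)\tilde X-\mathcal{H}(\mathrm{grad}_{\pi}\phi)g_m(\tilde X,\tilde Y)
\]
into both of the first two terms. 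The key observation is that the three correction terms $\tilde X(\phi)\tilde Y+\tilde Y(\phi)\tilde X$ and $\mathcal{H}(\mathrm{grad}_{\pi}\phi)g_m(\tilde X,\tilde Y)$ are all symmetric in the pair $(\tilde X,\tilde Y)$, so they cancel when one subtracts. For the bracket term, I would invoke the standard fact that horizontal lifts of $\pi$-related vector fields are $\pi$-related, which gives $\pi_{*}[\tilde X,\tilde Y]=[X,Y]$ and hence $\mathcal{H}([\tilde X,\tilde Y])=\widetilde{[X,Y]}$. Assembling the three pieces yields $\widetilde{\nabla^{*}_{X}Y-\nabla^{*}_{Y}X-[X,Y]}=\widetilde{\mathrm{Tor}(\nabla^{*})(X,Y)}$.

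For the second identity, I would use that the vertical distribution $\mathcal{V}(\mathbf{M})$ is integrable (it is the tangent distribution of the fibers), so for vertical $V,W$ the bracket $[V,W]$ is itself vertical. Taking the vertical projection of $\mathrm{Tor}(\nabla)(V,W)$ and applying $(\mathcal{V}\nabla\mathcal{V})_{V}W=\mathcal{V}(\nabla_{V}W)$, which restricts on each fiber to $\hat{\nabla}_{V}W$, gives
\[
\mathcal{V}\bigl(\mathrm{Tor}(\nabla)(V,W)\bigr)=\hat{\nabla}_{V}W-\hat{\nabla}_{W}V-[V,W]=\mathrm{Tor}(\hat{\nabla})(V,W).
\]

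There is no real obstacle here; the argument is essentially a bookkeeping check that the new conformal correction terms in the defining equation are symmetric in the two horizontal arguments and thus play no role when one antisymmetrizes to form the torsion. The only subtlety worth stressing in the write-up is the justification that $\mathcal{H}([\tilde X,\tilde Y])$ equals the horizontal lift of $[X,Y]$, since this is the one ingredient that does not appear explicitly in the definition of a conformal submersion with horizontal distribution but is inherited from $\tilde X,\tilde Y$ being $\pi$-related to $X,Y$.
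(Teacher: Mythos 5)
Your proposal is correct and is essentially the paper's intended argument: the paper simply asserts that the lemma ``follows immediately from the definition,'' and your write-up supplies exactly the bookkeeping that assertion suppresses (the symmetry of the conformal correction terms under antisymmetrization, the $\pi$-relatedness of $[\tilde X,\tilde Y]$ to $[X,Y]$, and the integrability of the vertical distribution for the fiber identity).
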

\begin{proof}
Proof follows immediately form the definition of the  conformal submersion with horizontal distribution.
\end{proof}
\begin{corollary}
If $\nabla$ is torsion-free, then $\nabla^{*}$ and $\hat{\nabla}$ are also torsion-free.
\end{corollary}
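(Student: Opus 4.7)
My plan is to deduce the corollary directly from the preceding Lemma, without further calculation. Assuming $\nabla$ is torsion-free means $Tor(\nabla)(E,F) = 0$ for every pair $E,F \in \mathcal{X}(\mathbf{M})$. I would apply this vanishing in two specialized settings: first with horizontal lifts $\tilde{X}, \tilde{Y}$ of arbitrary vector fields $X, Y \in \mathcal{X}(\mathbf{B})$, and second with pairs of vertical vector fields on $\mathbf{M}$.

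For the statement about $\nabla^{*}$, the first identity of the Lemma gives $\widetilde{Tor(\nabla^{*})(X,Y)} = \mathcal{H}(Tor(\nabla)(\tilde{X},\tilde{Y})) = 0$ for all $X, Y \in \mathcal{X}(\mathbf{B})$. By Note~1, $\pi_{*}|_{\mathcal{H}(M)_{p}}$ is a pointwise isomorphism, so the horizontal-lift assignment $Z \mapsto \tilde{Z}$ is injective as a map $\mathcal{X}(\mathbf{B}) \to \mathcal{X}(\mathbf{M})$. Hence $Tor(\nabla^{*})(X,Y) = 0$ for all $X, Y \in \mathcal{X}(\mathbf{B})$, giving the torsion-freeness of $\nabla^{*}$.

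For the statement about $\hat{\nabla}$, the second identity of the Lemma immediately yields $Tor(\hat{\nabla})(V,W) = \mathcal{V}(Tor(\nabla)(V,W)) = 0$ for vertical vector fields $V, W$ on $\mathbf{M}$. Since $\hat{\nabla}$ is the fiberwise connection induced by $\mathcal{V}\nabla\mathcal{V}$ (as recalled in the Note in Section~2), restricting this identity to vector fields tangent to a fiber $\pi^{-1}(b)$ is enough to conclude that $\hat{\nabla}^{b}$ is torsion-free on every fiber. The corollary is essentially bookkeeping, so there is no genuine obstacle; the only subtlety worth flagging is the use of the horizontal-lift isomorphism to upgrade vanishing of the lifted torsion to vanishing of $Tor(\nabla^{*})$ itself, which is what justifies the jump from the first displayed identity to the conclusion on $\mathbf{B}$.
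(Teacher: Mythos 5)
Your proposal is correct and matches the paper's (implicit) argument: the corollary is stated as an immediate consequence of the preceding Lemma, obtained by setting $Tor(\nabla)=0$ in both identities, exactly as you do. The extra remark that the horizontal lift is injective (via the pointwise isomorphism $\pi_{*}|_{\mathcal{H}(M)_{p}}$) is a correct and worthwhile detail that the paper leaves unstated.
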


Fundamental tensors $T$ and $A$ for a conformal submersion with horizontal distribution $\pi : (\mathbf{M},\nabla) \longrightarrow (\mathbf{B}, \nabla^{*})$ are defined for $E$ and $F$ in $\mathcal{X}(\mathbf{M})$ by 
\[
T_{E}F = \mathcal{H}\nabla_{\mathcal{V}E}(\mathcal{V}F)+ \mathcal{V}\nabla_{\mathcal{V}E}(\mathcal{H}F).
\]
and
\[
A_{E}F = \mathcal{V}\nabla_{\mathcal{H}E}(\mathcal{H}F)+ \mathcal{H}\nabla_{\mathcal{H}E}(\mathcal{V}F).
\]
Note that these are $(1,2)$-tensors. \par 
 We have the fundamental equations correspond to the conformal submersion with horizontal distribution. Let $R$ be the curvature tensor of $(\mathbf{M},\nabla)$ defined by 
 \begin{eqnarray*}
 R(E,F)G = \nabla_{[X,Y]}G -\nabla_{E}\nabla_{F}G+ \nabla_{F}\nabla_{E}G,
 \end{eqnarray*}
for $E,F$ and $G$ in $\mathcal{X}(\mathbf{M})$. Similarly, we denote the curvature tensor of $\nabla^{*}$ (respectively $\hat{\nabla}$) by $R^{*}$ (respectively $\hat{R}$). Define $(1,3)$-tensors $R^{P_{1},P_{2},P_{3}}$ for conformal submersion with horizontal distribution by 
\begin{eqnarray*}
R^{P_{1},P_{2},P_{3}}(E,F)G &=& P_{3}\nabla_{[P_{1}E,P_{2}F]}P_{3}G -P_{3}\nabla_{P_{1}E}(P_{3}\nabla_{P_{2}F}P_{3}G)\\
&& + P_{3}\nabla_{P_{2}F}(P_{3}\nabla_{{P_{1}E}}P_{3}G),
\end{eqnarray*}
where $P_{i}= \mathcal{H}$ or $\mathcal{V}$($i=1,2,3$)and $E,F,G$ are in $\mathcal{X}(\mathbf{M})$. Then, the following fundamental equations for conformal submersion with horizontal distribution can be obtained. 
\begin{theorem}
 Let $X,Y,Z$ be horizontal and $U,V,W$ vertical vector fields in $\mathbf{M}$. Then,
  \begin{eqnarray*}
 \mathcal{V}R(U,V)W &=& R^{\mathcal{V}\mathcal{V}\mathcal{V}}(U,V)W+T_{V}T_{U}W-T_{U}T_{V}W.\\
  \mathcal{H}R(U,V)W &=& \mathcal{H}(\nabla_{V}T)_{U}W - \mathcal{H}(\nabla_{U}T)_{V}W-T_{Tor(\nabla)(U,V)}W.\\
\mathcal{V}R(U,V)X &=&  \mathcal{H}(\nabla_{V}T)_{U}X - \mathcal{V}(\nabla_{U}T)_{V}X -T_{Tor(\nabla)(U,V)}X.\\
  \mathcal{H}R(U,V)X &=& R^{\mathcal{V}\mathcal{V}\mathcal{H}}(U,V)X+T_{V}T_{U}X-T_{U}T_{V}X.\\
 \mathcal{V}R(U,X)V &=& R^{\mathcal{V}\mathcal{H}\mathcal{V}}(U,X)V-T_{U}A_{X}V-A_{X}T_{U}V.\\
   \mathcal{H}R(U,X)V &=& \mathcal{H}(\nabla_{X}T)_{U}V -\mathcal{H}(\nabla_{U}A)_{X}V-A_{A_{X}U}V+T_{T_{U}X}V\\&&-T_{Tor(\nabla)(U,X)}V-A_{Tor(\nabla)(U,X)}V.\\
   \mathcal{V}R(U,X)Y &=& \mathcal{V}(\nabla_{X}T)_{U}Y -\mathcal{V}(\nabla_{U}A)_{X}Y-A_{A_{X}U}Y+T_{T_{U}X}Y\\&&-T_{Tor(\nabla)(U,X)}Y-A_{Tor(\nabla)(U,X)}Y.\\
   \mathcal{H}R(U,X)Y &=& R^{\mathcal{V}\mathcal{H}\mathcal{H}}(U,X)Y-T_{V}A_{X}Y+A_{X}T_{U}Y.\\
    \end{eqnarray*} 
   \begin{eqnarray*}
   \mathcal{V}R(X,Y)U &=& R^{\mathcal{H}\mathcal{H}\mathcal{V}}(X,Y)U+A_{Y}A_{X}U-A_{X}A_{Y}U.\\  
     \mathcal{H}R(X,Y)U &=& \mathcal{H}(\nabla_{Y}A)_{X}U -\mathcal{H}(\nabla_{X}A)_{Y}U+T_{A_{X}Y}U-T_{A_{Y}X}U\\&&-T_{Tor(\nabla)(X,Y)}U-A_{Tor(\nabla)(X,Y)}U.\\
   \mathcal{V}R(X,Y)Z &=& \mathcal{V}(\nabla_{Y}A)_{X}Z -\mathcal{V}(\nabla_{X}A)_{Y}Z+T_{A_{X}Y}Z-T_{A_{Y}X}Z\\&&-T_{Tor(\nabla)(X,Y)}Z-A_{Tor(\nabla)(X,Y)}Z.\\   
    \mathcal{H}R(X,Y)Z &=& R^{\mathcal{H}\mathcal{H}\mathcal{H}}(X,Y)Z+A_{Y}A_{X}Z-A_{X}A_{Y}Z.
\end{eqnarray*} 
 \end{theorem}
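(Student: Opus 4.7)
The plan is to derive each of the twelve identities by a direct computation of $R(E,F)G = \nabla_{[E,F]}G - \nabla_E\nabla_F G + \nabla_F\nabla_E G$ for appropriate choices of vertical/horizontal arguments, then projecting the result onto the vertical or horizontal subbundle. Note that the computation is really one about a smooth manifold $\mathbf{M}$ with an affine connection $\nabla$ and a splitting $T\mathbf{M} = \mathcal{V}(\mathbf{M}) \oplus \mathcal{H}(\mathbf{M})$; the conformal submersion hypothesis plays no further role, since the fundamental tensors $T$ and $A$ and the restricted curvature objects $R^{P_1,P_2,P_3}$ are all defined intrinsically from $\nabla$ and the splitting. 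The only facts I would use repeatedly are the decompositions $\mathcal{H}\nabla_U W = T_U W$, $\mathcal{V}\nabla_U X = T_U X$, $\mathcal{V}\nabla_X Y = A_X Y$, and $\mathcal{H}\nabla_X U = A_X U$ for $U,W \in \mathcal{V}(\mathbf{M})$ and $X,Y \in \mathcal{H}(\mathbf{M})$, together with their $\mathcal{V}$/$\mathcal{H}$ counterparts.

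For a pure case such as $\mathcal{V}R(U,V)W$ with $U,V,W$ vertical, I would write $\nabla_V W = \mathcal{V}\nabla_V W + T_V W$, apply $\mathcal{V}\nabla_U$, and use the decomposition above to obtain $\mathcal{V}\nabla_U\nabla_V W = \mathcal{V}\nabla_U(\mathcal{V}\nabla_V W) + T_U T_V W$. Subtracting the same expansion with $U$ and $V$ interchanged, and incorporating the vertical projection of the commutator term, produces exactly $R^{\mathcal{V}\mathcal{V}\mathcal{V}}(U,V)W + T_V T_U W - T_U T_V W$. For the mixed cases such as $\mathcal{V}R(U,X)Y$ or $\mathcal{H}R(X,Y)U$, the same template applies, but one must also rewrite the commutator via $[E,F] = \nabla_E F - \nabla_F E - \mathrm{Tor}(\nabla)(E,F)$. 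The torsion contribution gets fed into $T$ or $A$ according to its horizontal/vertical decomposition, giving the $T_{\mathrm{Tor}(\nabla)(E,F)}(\cdot)$ and $A_{\mathrm{Tor}(\nabla)(E,F)}(\cdot)$ terms in the statement, while the Codazzi-type terms $\mathcal{H}(\nabla_E T)_F G$ and $\mathcal{V}(\nabla_E A)_F G$ emerge upon recognising the tensor-derivative identity $(\nabla_E T)_F G = \nabla_E(T_F G) - T_{\nabla_E F}G - T_F(\nabla_E G)$ (and analogously for $A$).

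The main obstacle will be bookkeeping rather than any conceptual difficulty. Each of the twelve lines requires partitioning $\nabla_E\nabla_F G$ into a pure-type double derivative feeding $R^{P_1,P_2,P_3}$, a tensor cross-term coming from $T$ or $A$ applied to an already-projected piece, and, in the mixed cases, Codazzi-type derivatives together with torsion contributions. One must be careful not to double count the torsion, to keep the signs aligned with the convention $R(E,F)G = \nabla_{[E,F]}G - \nabla_E\nabla_F G + \nabla_F\nabla_E G$, and to verify that each decomposition closes onto the intended ranges of $T$ and $A$ (recalling $T_E = T_{\mathcal{V}E}$ and $A_E = A_{\mathcal{H}E}$, and that both reverse horizontal and vertical subspaces). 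Once the pattern is set by working out two or three representative cases, the remaining identities follow by systematic substitution of horizontal and vertical roles, in complete parallel with the affine computation of Abe and Hasegawa, the torsion terms being the only new feature relative to the torsion-free setting.
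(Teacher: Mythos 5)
Your plan is correct and is exactly the computation the paper intends: the theorem is stated without proof, deferring to the standard O'Neill/Abe--Hasegawa-style expansion of $R(E,F)G=\nabla_{[E,F]}G-\nabla_{E}\nabla_{F}G+\nabla_{F}\nabla_{E}G$ via the splitting, the decompositions $\mathcal{H}\nabla_{U}W=T_{U}W$, $\mathcal{V}\nabla_{X}Y=A_{X}Y$, etc., the tensor-derivative identity for $(\nabla_{E}T)_{F}G$ and $(\nabla_{E}A)_{F}G$, and the torsion decomposition of the bracket, which is precisely what you describe. Your observation that the conformal structure plays no role and only the splitting and the connection enter agrees with the paper's own remark that $T$ and $A$ (and hence these identities) make sense for any manifold with a splitting $T\mathbf{M}=\mathcal{V}(\mathbf{M})\oplus\mathcal{H}(\mathbf{M})$.
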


Now, for semi-Riemannian manifolds $(\mathbf{M},g_{m})$, $(\mathbf{B},g_{b})$ with affine connections $\nabla$ and $\nabla^*$ and the dual connections $\overline{\nabla}$ and $\overline{\nabla^{*}}$ respectively we prove

\begin{proposition}
Let $\pi : (\mathbf{M},g_{m}) \longrightarrow (\mathbf{B},g_{b})$ be a conformal submersion. Then, $\pi: (\mathbf{M},\nabla) \longrightarrow (\mathbf{B}, \nabla^{*}) $ is a conformal submersion with horizontal distribution  if and only if  $\pi: (\mathbf{M},\overline{\nabla}) \longrightarrow (\mathbf{B}, \overline{\nabla^{*}}) $ is a conformal submersion with horizontal distribution. 
\end{proposition}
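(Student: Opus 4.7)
The plan is to prove the equivalence by directly transferring the defining identity
\[
\mathcal{H}(\nabla_{\tilde{X}}\tilde{Y}) = \widetilde{\nabla^{*}_{X}Y} + \tilde{X}(\phi)\tilde{Y}+ \tilde{Y}(\phi)\tilde{X} -\mathcal{H}(grad_{\pi}\phi)\,g_{m}(\tilde{X},\tilde{Y})
\]
through the duality relations on $\mathbf{M}$ and $\mathbf{B}$. Since the double dual of a connection is itself, it suffices to prove only one implication; the converse follows by reading the same chain backwards (or by relabelling $\nabla \leftrightarrow \overline{\nabla}$).

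First I would pair both sides of the hypothesis with an arbitrary horizontal lift $\tilde{Z}$ and use the conformality $g_{m}(\tilde{U},\tilde{V})=e^{2\phi}g_{b}(U,V)$ together with the observation that $\tilde{Z}(\phi)=g_{m}(\mathcal{H}(grad_{\pi}\phi),\tilde{Z})$, obtaining a scalar identity of the form
\[
g_{m}(\nabla_{\tilde{X}}\tilde{Y},\tilde{Z}) = e^{2\phi}\bigl[g_{b}(\nabla^{*}_{X}Y,Z) + \tilde{X}(\phi)g_{b}(Y,Z) + \tilde{Y}(\phi)g_{b}(X,Z) - \tilde{Z}(\phi)g_{b}(X,Y)\bigr].
\]
Then I would introduce the dual connection $\overline{\nabla}$ on $\mathbf{M}$ via
\[
g_{m}(\overline{\nabla}_{\tilde{X}}\tilde{Y},\tilde{Z}) = \tilde{X}g_{m}(\tilde{Y},\tilde{Z}) - g_{m}(\nabla_{\tilde{X}}\tilde{Z},\tilde{Y}),
\]
expand $\tilde{X}g_{m}(\tilde{Y},\tilde{Z}) = 2\tilde{X}(\phi)e^{2\phi}g_{b}(Y,Z) + e^{2\phi}X g_{b}(Y,Z)$, and substitute the scalar identity above (with the roles of $Y$ and $Z$ swapped in the second term). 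Finally I would apply the duality relation on $\mathbf{B}$, namely $Xg_{b}(Y,Z) = g_{b}(\nabla^{*}_{X}Z,Y)+g_{b}(Z,\overline{\nabla^{*}}_{X}Y)$, which is precisely what is needed to convert the combination $Xg_{b}(Y,Z)-g_{b}(\nabla^{*}_{X}Z,Y)$ into $g_{b}(\overline{\nabla^{*}}_{X}Y,Z)$.

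After collecting the $\tilde{X}(\phi)$, $\tilde{Y}(\phi)$, $\tilde{Z}(\phi)$ terms, the result should simplify to
\[
g_{m}(\overline{\nabla}_{\tilde{X}}\tilde{Y},\tilde{Z}) = g_{m}\!\bigl(\widetilde{\overline{\nabla^{*}}_{X}Y} + \tilde{X}(\phi)\tilde{Y} + \tilde{Y}(\phi)\tilde{X} - \mathcal{H}(grad_{\pi}\phi)\,g_{m}(\tilde{X},\tilde{Y}),\; \tilde{Z}\bigr).
\]
Since $\tilde{Z}$ ranges over all horizontal lifts and horizontal lifts span $\mathcal{H}(\mathbf{M})$ pointwise, I could conclude that the horizontal projection $\mathcal{H}(\overline{\nabla}_{\tilde{X}}\tilde{Y})$ equals the bracketed expression, which is exactly the conformal-submersion identity for $(\overline{\nabla},\overline{\nabla^{*}})$.

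The main obstacle I anticipate is purely bookkeeping: the signs and the $2\tilde{X}(\phi)e^{2\phi}g_{b}(Y,Z)$ term from differentiating $e^{2\phi}$ must combine with the $-\tilde{X}(\phi)g_{b}(Z,Y)$ coming from the hypothesis (after the swap of $Y$ and $Z$) to leave a clean $+\tilde{X}(\phi)g_{b}(Y,Z)$ in the final expression, so care is needed to track which index the conformal derivative lands on. Once the cancellations are verified, the reverse implication is immediate from the involution $\overline{\overline{\nabla}}=\nabla$, $\overline{\overline{\nabla^{*}}}=\nabla^{*}$.
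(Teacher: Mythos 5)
Your proposal is correct and follows essentially the same route as the paper: both arguments combine the duality relation on $\mathbf{M}$, the duality relation on $\mathbf{B}$, and the conformality $g_{m}(\tilde{Y},\tilde{Z})=e^{2\phi}g_{b}(Y,Z)$ with the hypothesis to isolate the dual-connection identity, the only difference being that you organize the computation as a scalar identity in $g_{m}(\overline{\nabla}_{\tilde{X}}\tilde{Y},\tilde{Z})$ while the paper equates two expansions of $\tilde{X}g_{m}(\tilde{Y},\tilde{Z})$ and solves for $\pi_{*}(\overline{\nabla}_{\tilde{X}}\tilde{Z})$. The cancellations you flag ($2\tilde{X}(\phi)e^{2\phi}g_{b}(Y,Z)-e^{2\phi}\tilde{X}(\phi)g_{b}(Z,Y)=e^{2\phi}\tilde{X}(\phi)g_{b}(Y,Z)$, and $Xg_{b}(Y,Z)-g_{b}(\nabla^{*}_{X}Z,Y)=g_{b}(\overline{\nabla^{*}}_{X}Y,Z)$) do work out, and the final step is justified because $g_{m}$ is nondegenerate on $\mathcal{H}(\mathbf{M})$.
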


\begin{proof}
Consider, 
\begin{small}
\begin{eqnarray}
 \tilde{X}g_{m}(\tilde{Y},\tilde{Z}) &=& 2e^{2\phi}\tilde{X}(\phi)g_{b}(Y,Z)+e^{2\phi}Xg_{b}(Y,Z) \nonumber\\ 
 &=& 2e^{2\phi}\tilde{X}(\phi)g_{b}(Y,Z)+ e^{2\phi}\{g_{b}(\nabla^{*}_{X}Y,Z)+ g_{b}(Y,\overline{\nabla^{*}}_{X}Z)\}. \label{cf1}
 \end{eqnarray}
 \end{small}
 Now consider
 \begin{eqnarray}\label{cs4}
 \tilde{X}g_{m}(\tilde{Y},\tilde{Z}) &=& g_{m}(\nabla_{\tilde{X}}\tilde{Y}, \tilde{Z}) + g_{m}(\tilde{Y}, \overline{\nabla}_{\tilde{X}}\tilde{Z}) \nonumber\\
 &=& e^{2\phi}g_{b}(\pi_{*}(\nabla_{\tilde{X}}\tilde{Y}),Z)+ e^{2\phi}g_{b}(Y,\pi_{*}(\overline{\nabla}_{\tilde{X}}\tilde{Z})).
 \end{eqnarray}
 Since, 
  \begin{eqnarray}\label{cs5}
\pi_{*}(\nabla_{\tilde{X}}\tilde{Y}) &=& \nabla^{*}_{X}Y +\tilde{X}(\phi)Y+ \tilde{Y}(\phi)X - e^{2\phi}\pi_{*}(grad_{\pi}\phi)g_{b}(X,Y).
\end{eqnarray}
from the equations (\ref{cf1}),(\ref{cs4}) and (\ref{cs5}) we get 
\begin{eqnarray*}
\pi_{*}(\overline{\nabla}_{\tilde{X}}\tilde{Z}) &=& \overline{\nabla^{*}}_{X}Z +\tilde{X}(\phi)Z+ \tilde{Z}(\phi)X - \pi_{*}(grad_{\pi}\phi)e^{2\phi}g_{b}(X,Z).
\end{eqnarray*}
Hence,  $\pi: (\mathbf{M},\overline{\nabla}) \longrightarrow (\mathbf{B}, \overline{\nabla^{*}}) $ is a conformal submersion with horizontal distribution.\\ Converse is obtained by interchanging  $\nabla$, $\nabla^{*}$  with $\overline{\nabla}$, $\overline{\nabla^{*}}$ in the above proof.
\end{proof}

\section{Geodesics}
In this section, for a conformal submersion with horizontal distribution we prove a necessary and sufficient condition for $\pi\circ \sigma $ to be a geodesic of $\mathbf{B}$ when $\sigma$ is a geodesic of $\mathbf{M}$.
Let $M$, $B$ be Riemannian manifolds and $\pi :M\rightarrow B$ be a submersion. Let $E$ be a vector field on a curve $\sigma$ in $\mathbf{M}$ and the horizontal part $\mathcal{H}(E)$ and the vertical part $\mathcal{V}(E)$ of $E$ be denoted by $H$ and $V$, respectively. $\pi \circ \sigma$ is a curve in $B$ and $E_{*}$ denote the vector field $\pi_{*}(E) = \pi_{*}(H)$ on the curve $\pi \circ \sigma$ in $\mathbf{B}$. $E_{*}^{'}$ denote the covariant derivative of $E_{*}$ and is a vector field on $\pi \circ \sigma$. The horizontal lift to $\sigma$ of $E_{*}^{'}$ is denoted by $\tilde{E_{*}^{'}}$. In \cite{o1967submersions}, O'Neil compared the geodesics for semi-Riemannian submersion and Abe and Hasegawa \cite{abe2001affine} have done it for affine submersion with horizontal distribution.

Let $\pi :( \mathbf{M}, \nabla, g_{m} )\rightarrow(\mathbf{B},\nabla^{*}, g_{b})$ be a conformal submersion with horizontal distribution $\mathcal{H}(\mathbf{M})$. Throughout this section we assume $\nabla$ is torsion free. A curve $\sigma$ is a geodesic if and only if $\mathcal{H}(\sigma ^{''})=0$ and $\mathcal{V}(\sigma ^{''})=0$, where $\sigma^{''}$ is the covariant derivative of $\sigma^{'}$. So, first we obtain the equations for $\mathcal{H}(E^{'})$ and $\mathcal{V}(E^{'})$ for a vector field $E$ on the curve $\sigma$ in $\mathbf{M}$ for a conformal submersion with horizontal distribution.  

\begin{theorem}
Let $ \pi: (\mathbf{M},\nabla,g_{m}) \rightarrow (\mathbf{B},\nabla^{*},g_{b})$ be a conformal submersion with horizontal distribution, and let $E = H + V$ be a vector field on curve $\sigma$ in $\mathbf{M}$. Then, we have
\begin{eqnarray*}
\pi_{*}(\mathcal{H}(E^{'})) &=& E_{*}^{'}+\pi_{*}(A_{X}U+A_{X}V+T_{U}V)-e^{2\phi}\pi_{*}(grad_{\pi}\phi)g_{b}(\pi_{*}X,\pi_{*}H)\\&&+X(\phi)\pi_{*}H+H(\phi)\pi_{*}X,\\
\mathcal{V}(E^{'}) &=& A_{X}H+T_{U}H+\mathcal{V}(V^{'}),
\end{eqnarray*}
where $X= \mathcal{H}(\sigma^{'})$ and $U= \mathcal{V}(\sigma^{'})$.
\end{theorem}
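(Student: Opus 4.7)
The plan is to expand $E' = \nabla_{\sigma'} E$ using the decompositions $\sigma' = X+U$ and $E = H+V$, and then project each of the four resulting terms onto the vertical and horizontal subbundles. Writing
\[
E' \;=\; \nabla_X H + \nabla_X V + \nabla_U H + \nabla_U V,
\]
the vertical part falls out almost immediately from the definitions of the fundamental tensors: $\mathcal{V}(\nabla_X H) = A_X H$ and $\mathcal{V}(\nabla_U H) = T_U H$, while the two $V$-terms combine to $\mathcal{V}(\nabla_X V + \nabla_U V) = \mathcal{V}(\nabla_{\sigma'} V) = \mathcal{V}(V')$. This already delivers $\mathcal{V}(E') = A_X H + T_U H + \mathcal{V}(V')$ with essentially no further work.

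For the horizontal part, two of the four pieces are again direct from the definitions: $\mathcal{H}(\nabla_X V) = A_X V$ and $\mathcal{H}(\nabla_U V) = T_U V$. The term $\mathcal{H}(\nabla_X H)$ is then attacked by invoking the defining identity of a conformal submersion with horizontal distribution. Concretely, I extend $\pi_* X$ and $\pi_* H = E_*$ to local vector fields on a neighborhood of $\pi\sigma(t)$ in $\mathbf{B}$, take their horizontal lifts (which agree with $X$ and $H$ along $\sigma$), and substitute into
\[
\mathcal{H}(\nabla_{\tilde X}\tilde Y) = \widetilde{(\nabla^{*}_{X} Y)} + \tilde X(\phi)\tilde Y + \tilde Y(\phi)\tilde X - \mathcal{H}(\mathrm{grad}_{\pi}\phi)\,g_{m}(\tilde X,\tilde Y).
\]
Recognizing $\widetilde{\nabla^{*}_{\pi_{*} X} E_{*}}$ as $\widetilde{E_{*}'}$ along $\sigma$ supplies the desired leading term together with the $\phi$-correction terms $X(\phi)H$, $H(\phi)X$, and $-\mathcal{H}(\mathrm{grad}_{\pi}\phi)\,g_{m}(X,H)$. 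The fourth piece $\mathcal{H}(\nabla_U H)$ is handled using the torsion-freeness of $\nabla$ assumed throughout the section: from $\nabla_U H - \nabla_H U = [U,H]$, and from the fact that the Lie bracket of a vertical field with a basic (projectable horizontal) field is vertical, one obtains $\mathcal{H}(\nabla_U H) = \mathcal{H}(\nabla_H U) = A_{H} U$, which is the $A$-term appearing in the statement.

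Assembling the four contributions, applying $\pi_{*}$, and using the conformality relation $g_{m}(X,H) = e^{2\phi} g_{b}(\pi_{*}X,\pi_{*}H)$ together with $\pi_{*}(\mathcal{H}(\mathrm{grad}_\pi\phi)) = \pi_{*}(\mathrm{grad}_\pi\phi)$ collapses everything into the claimed identity for $\pi_{*}(\mathcal{H}(E'))$. The main technical care needed is in the extension step: the vectors $X$ and $H$ are only defined along $\sigma$, and one must check that $\mathcal{H}(\nabla_X H)|_{\sigma(t)}$ depends only on $X_{\sigma(t)}$ and on the values of $H$ along $\sigma$ near $t$, so that replacement by locally defined basic extensions is legitimate; for the other three pieces the same issue is resolved by the tensoriality of $T$ and $A$ in the appropriate slots. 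I expect the subtlest bookkeeping to sit in $\mathcal{H}(\nabla_U H)$, where the torsion-free hypothesis, the projectability of the chosen extension of $H$, and the identification with the $A$-tensor all have to be deployed simultaneously.
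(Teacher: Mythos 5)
Your argument is correct and follows essentially the same route as the paper's proof: the paper carries out the identical four-term decomposition of $\nabla_{\sigma'}E$, merely organized in a local frame of basic fields $W_i$ with coefficients $r^i, s^k$, and it likewise invokes the defining identity of a conformal submersion with horizontal distribution for the horizontal--horizontal term and the tensors $A$, $T$ (together with torsion-freeness) for the mixed terms. One small point: your computation of $\mathcal{H}(\nabla_U H)$ correctly yields $A_H U$, which agrees with the paper's own intermediate equation (\ref{cg4}) but not with the $A_X U$ printed in the theorem statement --- that appears to be a typo in the statement, harmless in the corollary where $H = X$.
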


\begin{proof}
 Consider a neighborhood of an arbitrary point $\sigma(t)$ of the curve $\sigma$ in $\mathbf{M}$. By choosing the base fields $W_{1},....,W_{n}$, where $n= dim \mathbf{B}$, near $\pi(\sigma(t))$ on $\mathbf{B}$ and an appropriate vertical base field near $\sigma(t)$, we can derive

\begin{eqnarray}
(E_{*}^{'})_{t} &=& \sum_{i}r^{i'}(t)(W_{i})_{\pi(\sigma(t))} + \sum_{i,k} r^{i}(t)s^k(t)(\nabla^{*}_{W_{k}}W_{i})_{\pi(\sigma(t))},\label{cg3}
\end{eqnarray}   
\begin{eqnarray}\label{cg4}
\pi_{*}(\mathcal{H}(E^{'})_{t}) &=& \sum_{i} r^{i'}(t)(W_{i})_{\pi(\sigma(t))}+ \sum_{i,k}r^{i}(t)s^{k}(t)\pi_{*}(\mathcal{H}(\nabla_{\tilde{W_{k}}}\tilde{W_{i}}))_{\pi(\sigma(t))} \nonumber\\ 
&&+ \pi_{*}((A_{H}U)+(A_{X}V)+(T_{U}V))_{\pi(\sigma(t))},
\end{eqnarray}
where $\tilde{W}_{i}$ be the horizontal lift of $W_{i}$, for $i=1,2,...n$ and $r^{i}(t)$ ( respectively $s^{k}(t)$) be the coefficients of $H$ (respectively of $X$ ) in the representation using the base fields  $\tilde{W}_{i}$ restricted to $\sigma$.

Since $\pi$ is a conformal submersion with horizontal distribution
\begin{small}
\begin{eqnarray*}
\pi_{*}(\mathcal{H}(\nabla_{H}X)) = \nabla^{*}_{\pi_{*}(H)}\pi_{*}X + X(\phi)\pi_{*}H+ H(\phi)\pi_{*}X- e^{2\phi}\pi_{*}(grad_{\pi}\phi)g_{b}(\pi_{*}X,\pi_{*}H).
\end{eqnarray*}
\end{small}
Hence
\begin{small}
\begin{eqnarray*}
\pi_{*}(\mathcal{H}(E^{'})) &=& E_{*}^{'}+\pi_{*}(A_{X}U+A_{X}V+T_{U}V)-e^{2\phi}\pi_{*}(grad_{\pi}\phi)g_{b}(\pi_{*}X,\pi_{*}H)\\&& +X(\phi)\pi_{*}H+H(\phi)\pi_{*}X.
\end{eqnarray*}
\end{small}
Similarly we can prove $\mathcal{V}(E^{'}) = A_{X}H+T_{U}H+\mathcal{V}(V^{'}).$
\end{proof}
\vspace{.2cm}

 For $\sigma^{''}$ we have

\begin{corollary}
Let $\sigma$ be a curve in $\mathbf{M}$ with $X = \mathcal{H}(\sigma^{'})$ and $U = \mathcal{V}(\sigma^{'})$. Then, 
\begin{eqnarray}
\pi_{*}(\mathcal{H}(\sigma^{''})) &=& \sigma_{*}^{''}+\pi_{*}(2A_{X}U+T_{U}U)-e^{2\phi}\pi_{*}(grad_{\pi}\phi)g_{b}(\pi_{*}X,\pi_{*}X)\nonumber\\&&+2X(\phi)\pi_{*}X, \label{cg5}\\
\mathcal{V}(\sigma^{''}) &=& A_{X}X+T_{U}X+\mathcal{V}(U^{'}), \label{cg6}
\end{eqnarray}
where $\sigma_{*}^{''}$ denotes the covariant derivative of $(\pi \circ \sigma)^{'}$.
\end{corollary}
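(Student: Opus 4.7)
The plan is to derive the corollary as a direct specialization of the preceding theorem to the case $E = \sigma'$. The preceding theorem gives the horizontal and vertical parts of the covariant derivative $E'$ for any vector field $E = H + V$ on $\sigma$, expressed in terms of $X = \mathcal{H}(\sigma')$ and $U = \mathcal{V}(\sigma')$. When we take $E = \sigma'$ itself, the decomposition forces $H = X$ and $V = U$, so every occurrence of $H$ (respectively $V$) in the theorem's formulas collapses to $X$ (respectively $U$).

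First, I would record that $E_{*} = \pi_{*}(\sigma') = (\pi\circ\sigma)'$, so that $E_{*}^{'}$, which is the covariant derivative of $E_{*}$ along $\pi\circ\sigma$, becomes exactly $\sigma_{*}^{''}$ as defined in the statement. Next, I would substitute $H = X$ and $V = U$ into the first formula of the theorem:
\begin{eqnarray*}
\pi_{*}(\mathcal{H}(\sigma^{''})) &=& \sigma_{*}^{''}+\pi_{*}(A_{X}U+A_{X}U+T_{U}U)\\
&&-\,e^{2\phi}\pi_{*}(grad_{\pi}\phi)g_{b}(\pi_{*}X,\pi_{*}X)+X(\phi)\pi_{*}X+X(\phi)\pi_{*}X,
\end{eqnarray*}
and then combine the like terms $A_{X}U + A_{X}U = 2A_{X}U$ and $X(\phi)\pi_{*}X + X(\phi)\pi_{*}X = 2X(\phi)\pi_{*}X$ to reach equation (\ref{cg5}).

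For the vertical part, the substitution $H = X$ and $V = U$ into the second formula of the theorem immediately yields
\begin{eqnarray*}
\mathcal{V}(\sigma^{''}) &=& A_{X}X+T_{U}X+\mathcal{V}(U^{'}),
\end{eqnarray*}
which is exactly equation (\ref{cg6}). There is no real obstacle here: the corollary is a bookkeeping specialization, and the only subtlety worth spelling out in the written proof is the identification $E_{*}^{'} = \sigma_{*}^{''}$, which follows from the very definitions introduced at the start of Section 4. No new analytic input beyond the previous theorem and the fact that $\mathcal{H}$ and $\mathcal{V}$ are projections applied pointwise along $\sigma$ is required.
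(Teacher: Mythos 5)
Your proof is correct and takes exactly the route the paper intends: the corollary is stated without proof as the immediate specialization $E=\sigma'$ of the preceding theorem, and your substitutions $H=X$, $V=U$, together with the identification $E_{*}=\pi_{*}(\sigma')=(\pi\circ\sigma)'$ so that $E_{*}^{'}=\sigma_{*}^{''}$, reproduce equations (\ref{cg5}) and (\ref{cg6}) precisely. No gap; the only caveat worth noting is a harmless notational wobble in the paper's own theorem (the term appears as $A_{X}U$ in the statement and $A_{H}U$ in its proof), which is irrelevant here since $H=X$ in this specialization.
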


Now for a conformal submersion with horizontal distribution we prove a necessary and sufficient condition for $\pi \circ \sigma$ to become a geodesic of $\mathbf{B}$ when $\sigma$ is a geodesic of $\mathbf{M}$.

 \begin{theorem}
 Let $ \pi: (\mathbf{M},\nabla,g_{m}) \rightarrow (\mathbf{B},\nabla^{*},g_{b})$ be a conformal submersion with horizontal distribution. If $\sigma$ is a geodesic of $\mathbf{M}$, then $\pi \circ \sigma$ is a geodesic of $\mathbf{B}$ if and only if 
 
 \begin{eqnarray*}
 \pi_{*}(2A_{X}U+T_{U}U))+2X(\phi)\pi_{*}X =\pi_{*}(grad_{\pi}\phi) \parallel X \parallel^{2},
 \end{eqnarray*}
 where $X = \mathcal{H}(\sigma^{'})$ and $U = \mathcal{V}(\sigma^{'})$ and $\parallel X \parallel^{2} = g_{m}(X,X)$.
 \end{theorem}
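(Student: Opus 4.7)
The plan is to read the biconditional directly off the first equation in the preceding Corollary, namely
\begin{eqnarray*}
\pi_{*}(\mathcal{H}(\sigma^{''})) &=& \sigma_{*}^{''}+\pi_{*}(2A_{X}U+T_{U}U)-e^{2\phi}\pi_{*}(grad_{\pi}\phi)g_{b}(\pi_{*}X,\pi_{*}X)\\
&&+2X(\phi)\pi_{*}X,
\end{eqnarray*}
and then to unpack each side of the claimed equivalence using the hypothesis that $\sigma$ is a geodesic of $\mathbf{M}$ and the definitions of the notions involved.

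First I would note that $\sigma$ being a geodesic of $(\mathbf{M},\nabla)$ means $\sigma^{''}=0$, and in particular $\mathcal{H}(\sigma^{''})=0$, so the left hand side of the displayed Corollary vanishes. By definition, $\pi\circ\sigma$ is a geodesic of $(\mathbf{B},\nabla^{*})$ exactly when $\sigma_{*}^{''}=0$. Substituting both facts into the Corollary, the statement $\sigma_{*}^{''}=0$ becomes equivalent to
\begin{eqnarray*}
0 &=& \pi_{*}(2A_{X}U+T_{U}U)-e^{2\phi}\pi_{*}(grad_{\pi}\phi)g_{b}(\pi_{*}X,\pi_{*}X)+2X(\phi)\pi_{*}X.
\end{eqnarray*}

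Next I would use the conformality of $\pi$ to convert the base-metric expression into a manifold-metric one: since $X$ is horizontal,
\begin{eqnarray*}
\parallel X\parallel^{2} \;=\; g_{m}(X,X) \;=\; e^{2\phi}g_{b}(\pi_{*}X,\pi_{*}X),
\end{eqnarray*}
so the middle term on the right above equals $\pi_{*}(grad_{\pi}\phi)\parallel X\parallel^{2}$. Rearranging yields exactly
\begin{eqnarray*}
\pi_{*}(2A_{X}U+T_{U}U)+2X(\phi)\pi_{*}X \;=\; \pi_{*}(grad_{\pi}\phi)\parallel X\parallel^{2},
\end{eqnarray*}
which is the required condition, and the chain of equivalences runs in both directions.

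There is essentially no obstacle here: the work has already been done in deriving the Corollary, and both the geodesic hypothesis and the definition of a geodesic on $\mathbf{B}$ plug in cleanly. The only subtlety worth flagging is the step converting $e^{2\phi}g_{b}(\pi_{*}X,\pi_{*}X)$ to $\parallel X\parallel^{2}$, which uses horizontality of $X=\mathcal{H}(\sigma^{'})$ together with the conformal submersion relation defining the setting; without this rewrite the final equation would look different from the one stated.
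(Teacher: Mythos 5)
Your proposal is correct and follows essentially the same route as the paper: substitute $\mathcal{H}(\sigma'')=0$ into equation (\ref{cg5}), identify $\pi\circ\sigma$ being a geodesic with $\sigma_*''=0$, and rewrite $e^{2\phi}g_b(\pi_*X,\pi_*X)$ as $\|X\|^2$ via the conformal relation. The only difference is that you make explicit the metric-conversion step that the paper performs silently.
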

 
\begin{proof}
Since $\sigma$ is a geodesic on $\mathbf{M}$ from the equation (\ref{cg5}) 

\begin{eqnarray*}
 \sigma_{*}^{''} &=& \pi_{*}(grad_{\pi}\phi) \parallel X \parallel^{2} - \pi_{*}(2A_{X}U+T_{U}U) - 2d\phi(X)\pi_{*}X.
\end{eqnarray*}
Hence, $\pi \circ \sigma$ is a geodesic on $\mathbf{B}$ if and only if 
\begin{eqnarray*}
\pi_{*}(2A_{X}U+T_{U}U))+2X(\phi)\pi_{*}X =\pi_{*}(grad_{\pi}\phi) \parallel X \parallel^{2}.
 \end{eqnarray*}  
  
\end{proof}
\begin{remark}
If $\sigma$ is a horizontal geodesic (that is, $\sigma$ is a geodesic with $\mathcal{V}(\sigma^{'})=0$), then $\pi \circ \sigma$ is a geodesic if and only if $2X(\phi)\pi_{*}X =\pi_{*}(grad_{\pi}\phi) \parallel X \parallel^{2}$.
\end{remark}

\begin{definition}
Let $ \pi: (\mathbf{M},\nabla,g_{m}) \rightarrow (\mathbf{B},\nabla^{*},g_{b})$ be a conformal submersion with horizontal distribution and $\alpha$ be a smooth curve in $\mathtt{B}$. Let $\alpha^{'}$ be the tangent vector field of $\alpha$ and $\tilde{(\alpha^{'})}$ be its horizontal lift. Define the horizontal lift of $\alpha$ as the integral curve $\sigma$ on $\mathbf{M}$ of $(\tilde{\alpha^{'})}$.
\end{definition}
Now, we have 
\begin{proposition}\label{cg10}
Let $ \pi: (\mathbf{M},\nabla,g_{m}) \rightarrow (\mathbf{B},\nabla^{*},g_{b})$ be a conformal submersion with horizontal distribution  such that $ A_{Z}Z =0$ for all horizontal vector fields $Z$. Then, every horizontal lift of a geodesic of $\mathbf{B}$ is a geodesic of $\mathbf{M}$ if and only if $2X(\phi)\pi_{*}X =\pi_{*}(grad_{\pi}\phi) \parallel X \parallel^{2}$, where $X$ is the horizontal part of the tangent vector field of the horizontal lift of the geodesic on $\mathbf{B}$.
\end{proposition}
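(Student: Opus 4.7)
The plan is to reduce the proposition to the corollary preceding it, exploiting that a horizontal lift has purely horizontal velocity so that many terms in equations (\ref{cg5}) and (\ref{cg6}) drop out. Let $\alpha$ be a geodesic of $\mathbf{B}$ and let $\sigma$ be its horizontal lift. By the very definition of horizontal lift, $\sigma' = \widetilde{\alpha'}$ is horizontal, so $U = \mathcal{V}(\sigma') = 0$ and $X = \mathcal{H}(\sigma') = \sigma'$.

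Plugging $U=0$ into equation (\ref{cg6}) I get $\mathcal{V}(\sigma'') = A_X X$, which vanishes by the hypothesis $A_Z Z = 0$ for horizontal $Z$. So the vertical component of $\sigma''$ is already zero, independently of any further condition, and $\sigma$ will be a geodesic precisely when $\mathcal{H}(\sigma'') = 0$. Since $\pi_{*}$ restricted to the horizontal subspace is an isomorphism, this is equivalent to $\pi_{*}(\mathcal{H}(\sigma'')) = 0$.

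Now I substitute $U=0$ into (\ref{cg5}):
\begin{equation*}
\pi_{*}(\mathcal{H}(\sigma'')) = \sigma_{*}^{''} - e^{2\phi}\pi_{*}(grad_{\pi}\phi)\,g_{b}(\pi_{*}X,\pi_{*}X) + 2X(\phi)\pi_{*}X.
\end{equation*}
Because $\pi \circ \sigma = \alpha$ is a geodesic of $\mathbf{B}$, the term $\sigma_{*}^{''}$ vanishes. The conformality relation $g_m(X,X) = e^{2\phi} g_b(\pi_* X, \pi_* X)$ applied to the horizontal $X$ converts the coefficient $e^{2\phi} g_b(\pi_* X, \pi_* X)$ into $\|X\|^2$. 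So the remaining expression equals zero if and only if
\begin{equation*}
2X(\phi)\pi_{*}X = \pi_{*}(grad_{\pi}\phi)\,\|X\|^{2},
\end{equation*}
which is exactly the stated condition. Both directions of the equivalence are then handled in one stroke: the hypothesis $A_Z Z = 0$ makes $\mathcal{V}(\sigma'')$ harmless, and the conformal rescaling factor absorbs cleanly into $\|X\|^2$.

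There is essentially no hard step here; the work is entirely bookkeeping inside the corollary. The only point requiring a moment of care is confirming that $\sigma_{*}^{''}$ appearing in (\ref{cg5}) is indeed the covariant derivative of $\alpha'$ in $\mathbf{B}$ (so that the geodesic hypothesis on $\alpha$ forces it to vanish), and that the conformal factor matches: $g_m(X,X) = e^{2\phi} g_b(\pi_* X, \pi_* X)$ for horizontal $X$, turning the $e^{2\phi} g_b(\pi_* X, \pi_* X)$ in (\ref{cg5}) into $\|X\|^2$ in the stated condition. Once these identifications are made, the proof is a single line.
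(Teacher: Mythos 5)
Your proposal is correct and follows essentially the same route as the paper: set $U=0$ for the horizontal lift, use the hypothesis $A_ZZ=0$ to kill $\mathcal{V}(\sigma'')$ via (\ref{cg6}), and reduce $\pi_{*}(\mathcal{H}(\sigma''))=0$ in (\ref{cg5}) to the stated condition using $\sigma_{*}''=\alpha''=0$ and the conformality identity $e^{2\phi}g_{b}(\pi_{*}X,\pi_{*}X)=\|X\|^{2}$. Your explicit remarks that $\pi_{*}$ is an isomorphism on the horizontal subspace and that the conformal factor converts $e^{2\phi}g_{b}(\pi_{*}X,\pi_{*}X)$ into $\|X\|^{2}$ make precise two steps the paper leaves implicit.
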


\begin{proof}
Let $\alpha$ be a geodesic on $\mathbf{B}$, $\sigma$ be the horizontal lift of $\alpha$. Then, we have $\pi \circ \sigma = \alpha$ and $\sigma^{'}(t)$ = $\tilde{(\alpha^{'}(t))}$. Let $X = \mathcal{H}(\sigma^{'}(t))$ and $U = \mathcal{V}(\sigma^{'}(t))$, clearly $X = \tilde{(\alpha^{'}(t))}$ and $U = 0$. Then, from the equations (\ref{cg5}) and (\ref{cg6}) 
\begin{eqnarray*}
\pi_{*}(\mathcal{H}(\sigma^{''})) &=& \alpha^{''}-\pi_{*}(grad_{\pi}\phi) \parallel X \parallel^{2}+2X(\phi)\pi_{*}X. \\
\mathcal{V}(\sigma^{''}) &=& A_{X}X. 
\end{eqnarray*}
Since $\alpha$ is a geodesic and $ A_{X}X = 0$ we have, $\sigma^{''} = 0$ if and only if  $2X(\phi)\pi_{*}X =\pi_{*}(grad_{\pi}\phi) \parallel X \parallel^{2}$. That is, every horizontal lift of a geodesic of $\mathbf{B}$ is a geodesic of $\mathbf{M}$ if and only if $2X(\phi)\pi_{*}X =\pi_{*}(grad_{\pi}\phi) \parallel X \parallel^{2}$.
\end{proof}
 \begin{corollary} 
 Let $ \pi: (\mathbf{M},\nabla,g_{m}) \rightarrow (\mathbf{B},\nabla^{*},g_{b})$ be a conformal submersion with horizontal distribution  such that $ A_{Z}Z =0$ for all horizontal vector fields $Z$. Then, $\nabla^{*}$ is geodesically complete if $\nabla$ is geodesically complete and $2X(\phi)\pi_{*}X =\pi_{*}(grad_{\pi}\phi) \parallel X \parallel^{2}$, where $X$ is the horizontal part of the tangent vector field of the horizontal lift of the geodesic on $\mathbf{B}$.
\end{corollary}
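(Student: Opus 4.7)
The plan is to use Proposition \ref{cg10} to transfer geodesics from $\mathbf{B}$ to $\mathbf{M}$ via horizontal lifting, then exploit completeness of $\nabla$ to push them to all of $\mathbb{R}$, and finally project back down. Concretely, I would start with a maximal $\nabla^*$-geodesic $\alpha$ of $\mathbf{B}$ defined on some open interval $I \ni 0$. Pick any $p \in \pi^{-1}(\alpha(0))$, and let $v \in \mathcal{H}(\mathbf{M})_{p}$ be the unique horizontal vector with $\pi_{*}v = \alpha'(0)$. Let $\bar\sigma : \mathbb{R} \to \mathbf{M}$ be the $\nabla$-geodesic with initial data $(p,v)$; this exists globally since $\nabla$ is geodesically complete.

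The first real step is to show that $\bar\sigma$ stays horizontal at every time, not just at $t = 0$. Writing $X = \mathcal{H}(\bar\sigma')$ and $U = \mathcal{V}(\bar\sigma')$, equation (\ref{cg6}) combined with $\bar\sigma'' = 0$ and the hypothesis $A_{Z}Z = 0$ for horizontal $Z$ reduces to $T_{U}X + \mathcal{V}(U') = 0$. This is a first-order linear ODE in $U$ along $\bar\sigma$ (linearity uses that $T$ is a $(1,2)$-tensor, so $T_{U}X$ is $C^{\infty}$-linear in $U$). Since $U(0) = 0$, ODE uniqueness forces $U \equiv 0$, so $\bar\sigma$ is horizontal throughout.

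Once $\bar\sigma$ is horizontal, I would apply equation (\ref{cg5}) with $U \equiv 0$, $\bar\sigma'' = 0$, and $A_{X}X = 0$; combined with the hypothesis $2X(\phi)\pi_{*}X = \pi_{*}(\mathrm{grad}_{\pi}\phi)\parallel X \parallel^{2}$, this yields $(\pi \circ \bar\sigma)'' = 0$. Hence $\pi \circ \bar\sigma$ is a $\nabla^{*}$-geodesic defined on all of $\mathbb{R}$. By construction its initial data at $t = 0$ matches that of $\alpha$, so uniqueness of geodesics in $\mathbf{B}$ forces $\pi \circ \bar\sigma$ to extend $\alpha$ to all of $\mathbb{R}$. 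Since the initial geodesic was arbitrary, $\nabla^{*}$ is geodesically complete.

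The main obstacle I expect is justifying that $\bar\sigma$ remains horizontal for all time. Naively, one is tempted to just invoke Proposition \ref{cg10} in reverse (the geodesic of $\mathbf{M}$ with horizontal initial velocity should coincide with the horizontal lift of its projection), but that reverse direction was not explicitly proved, so the cleanest route is the ODE uniqueness argument above based on (\ref{cg6}). Once that horizontality is in hand, the rest is a direct substitution into (\ref{cg5}) plus the standard completeness lift argument.
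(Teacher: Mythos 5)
Your proposal is correct and follows the same high-level strategy as the paper (lift, use completeness of $\nabla$, project back), but it is noticeably more careful at the one point where the paper is thin. The paper's proof takes the horizontal lift $\tilde{\alpha}$ of $\alpha$, invokes Proposition \ref{cg10} to say it is a $\nabla$-geodesic, extends it to all of $\mathbb{R}$ by completeness, and then asserts that the projection of the extension is a geodesic extending $\alpha$. That last assertion silently requires that the extended $\nabla$-geodesic remain horizontal (so that $U=0$ and the condition $2X(\phi)\pi_{*}X=\pi_{*}(grad_{\pi}\phi)\parallel X\parallel^{2}$ suffices to make the projection a geodesic); the paper does not justify this. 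You instead build the global geodesic $\bar\sigma$ directly from horizontal initial data and prove $U\equiv 0$ for all time by applying equation (\ref{cg6}) with $\bar\sigma''=0$ and $A_{X}X=0$ to get the linear ODE $T_{U}X+\mathcal{V}(U')=0$ with $U(0)=0$, then conclude via uniqueness; after that, substituting into (\ref{cg5}) and using the stated hypothesis gives $(\pi\circ\bar\sigma)''=0$, and uniqueness of geodesics in $\mathbf{B}$ identifies $\pi\circ\bar\sigma$ with an extension of $\alpha$. What your route buys is a rigorous treatment of the persistence of horizontality along the extension, which is exactly the gap in the published argument; what the paper's route buys is brevity, since it reuses Proposition \ref{cg10} as a black box. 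Your argument is the one I would keep.
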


\begin{proof}
Let $\alpha$ be a geodesic of $\mathbf{B}$ and $\tilde{\alpha}$ be its horizontal lift to $\mathbf{M}$, by Proposition (\ref{cg10})  $\tilde{\alpha}$ is a geodesic on $\mathbf{M}$. Since $\nabla$ is geodesically complete, $\tilde{\alpha}$ can be defined on the entire real line. Then, the projected curve of the extension of $\tilde{\alpha}$ is a geodesic and is the extension of $\alpha$, that is $\nabla^{*}$ is geodesically complete.
\end{proof}

\section*{Acknowledgement}
 Mahesh T V thanks Indian Institute of Space Science and Technology (IIST), Department of Space, Government of India for the award of Doctoral Research Fellowship.

%

\bibliographystyle{spmpsci}      


\end{document}